\theoremstyle{plain}
\newtheorem{theorem}{Theorem}
\newtheorem{corollary}[theorem]{Corollary}
\theoremstyle{definition}
\newcommand{\R}[1]{$\mathbb{R}^#1$}
\newcommand{\V}[1]{\vec{\textbf{#1}}}
\begin{document}
\title{Redimensioning of Euclidean Spaces}
\author{Piyush Ahuja\thanks{Piyush Ahuja will graduate from IIT Delhi in 2013 with a major in Maths and Computing. His current research straddles the fields of Algorithms and Economics. The work presented here was concluded in his sophomore year.} \and Subiman Kundu\thanks{Dr. Subiman Kundu is \ldots  }}
\date{}
\maketitle
\abstract{A vector space over a field $\mathbb{F}$ is a set $V$ together with two binary operations, called vector addition and scalar multiplication. It is standard practice to think of a Euclidean space $\mathbb{R}^n$ as an $n$-dimensional real coordinate space i.e. the space of all $n$-tuples of real numbers ($R^n$), with vector operations defined using real addition and multiplication coordinate-wise. A natural question which arises is if it is possible to redefine vector operations on the space in such a way that it acquires some other dimension, say $k$ (over the same field i.e., $\mathbb{R}$). In this paper, we answer the question in the affirmative, for all $k\in\mathbb{N}$. We achieve the required dimension by `dragging' the structure of a standard $k$-dimensional Euclidean space (\R{k}) on the $n$-tuple of real numbers ($R^n$). At the heart of the argument is Cantor's counterintuitive result that $\mathbb{R}$ is numerically equivalent to $\mathbb{R}^n$ for all $n\in\mathbb{N}$, which can be proved through an elegant construction. Finally, we generalize the result to all finite dimensional vector spaces.

\smallskip
\noindent \textbf{Keywords.}  Vector Space, Euclidean Space, Numerical Equivalence, Dimension, Isomorphism}

\section{Introduction}

\begin{quote}
\textit{An engineer, a physicist, and a mathematician are discussing how to visualise four dimensions: \\
Engineer: I never really get it.\\
Physicist: Oh it's really easy, just imagine three dimensional space over a time - that adds your fourth dimension.\\
Mathematician: No, it's way easier than that; just imagine \R{n}, then set $n$ = 4.}
\end{quote}

The humourous anectode strikes at the heart of the notion of `dimension'. With our Euclidean intuitions,  inherited from ancient primates, it is easy to jump to the notion that the physical universe we exist in can be well represented by a 3 parameter model of depth, breadth and height - that it is 3-dimensional. But what does such a statement really mean?

Modern mathematics has a way to formalize, generalize and give meaning to these notions by defining the Euclidean space as an $n$-dimensional real vector space. The vectors correspond to the points, and the addition operation in the vector space corresponds to translations in the Euclidean space. Defined in this manner, the plane is a 2-dimensional real vector pace \R{2}, the space is a 3-dimensional real vector space \R{3}, and so on. 

Given this context, the idea of having $R^n$ (the set of all $n$-tuples of real numbers), with a different dimension (say $k$), may seem absurd at first glance. The gap in the intuition stems from thinking of dimension as purely residing in the properties of the set $R^n$, and not in the vector operations that define the relationship between members of the set. The space \R{3} has dimension 3 not because a point in \R{3} is described by 3 different coordinates, but because it has imbibed certain special properties because of the way vector addition and scalar multiplication have been defined over it.\footnote{The idea of independence is crucial here. Space has three dimensions because the length of a box is independent of its width or breadth, and space-time is four-dimensional because the location of a point in time is independent of its location in space. Thus, space is three-dimensional because every point in space can be described by a linear combination of three independent vectors.}

We show that it is possible to change the structure of the space \R{n} to one having an arbitrary dimension $k\in\mathbb{N}$ (for all $n\in\mathbb{N}$). We do this by `naturally relating' it to the space \R{k}, in particular  redefining vector addition and scalar multiplication on $R^n$ to imitate those of the space \R{k}. We then generalize the result to any finite-dimensional vector space which exists in bijection with another vector space of a different dimension.

\section{On \R{n} being numerically equivalent to \R{k}}
It is a well known theorem in Linear Algebra that every $n$-dimensional vector space over a field $\mathbb{F}$ is isomorphic to the standard space $\mathbb{F}^n$. Thus, if $R^n$ were to attain the dimension $k$ over $\mathbb{R}$, the resulting space would be isomorphic to \R{k}. To this end, we first prove that the space \R{n} is numerically equivalent to the space \R{k}, which is a necessary condition for such an isomorphism to exist.\footnote{The counterintuitive result that cardinality did not respect dimensions was first discovered by Cantor, and led to his now famous remark, ``I see it, but I don’t believe it." For a better exposition, read \cite{gouvea2011cantor}}.

We first prove that the the unit square $\left[0,1\right]\times\left[0,1\right]$ is numerically equivalent to the unit interval $\left[0,1\right]$. This would suffice to prove that $\mathbb{R}$ is numerically equivalent to \R{2}, since a bijection from $\left[0,1\right]\mapsto\mathbb{R}$ exists. Also, it does not really matter whether we consider $\left[0,1\right]$, $\left(0,1\right]$, or $\left(0,1\right)$, since there are bijections between all of these.

\begin{theorem}
The set  $\left(0,1\right]\times\left(0,1\right]$ is numerically equivalent to the unit interval $\left(0,1\right]$
\end{theorem}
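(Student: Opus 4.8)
\section{Proof proposal}

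The plan is to construct an explicit bijection $f\colon(0,1]\times(0,1]\to(0,1]$ by interleaving decimal expansions, with a small twist to get around the notorious ambiguity of those expansions. First I would fix, for every $x\in(0,1]$, the unique decimal expansion $x = 0.a_1a_2a_3\cdots$ in which infinitely many of the digits $a_i$ are nonzero; under this convention $1 = 0.999\cdots$, no expansion ends in a tail of zeros, and the assignment $x\mapsto(a_1,a_2,\dots)$ is a genuine bijection onto the set of digit sequences having infinitely many nonzero terms.

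Next I would group the digits into blocks: scan $a_1a_2\cdots$ from the left and cut it immediately after each nonzero digit, so that every block has the shape $\underbrace{0\cdots0}_{\geq 0}\,d$ with $d\in\{1,\dots,9\}$. Since the chosen expansion has infinitely many nonzero digits, this produces an infinite sequence of finite nonempty blocks $B_1(x),B_2(x),\dots$, and the decomposition is manifestly unique and reversible, as concatenating the blocks simply returns the original digit sequence. The bijection is then defined by shuffling blocks: $f(x,y)$ is the number whose block sequence is $B_1(x),B_1(y),B_2(x),B_2(y),\dots$. Because this concatenation again contains infinitely many nonzero digits, it is a legitimate canonical expansion of an element of $(0,1]$, so $f$ is well defined.

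Injectivity and surjectivity would then be routine. For injectivity, from $z=f(x,y)$ one recomputes the unique block decomposition of $z$; its odd-indexed blocks reassemble $x$ and its even-indexed blocks reassemble $y$. For surjectivity, given any $z\in(0,1]$ with block sequence $D_1,D_2,D_3,\dots$, let $x$ be the number with blocks $D_1,D_3,D_5,\dots$ and $y$ the number with blocks $D_2,D_4,\dots$; each of these is an infinite sequence of blocks, hence the canonical expansion of a point of $(0,1]$, and $f(x,y)=z$.

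The one genuine obstacle is precisely the non-uniqueness of decimal representations: the naive digit-interleaving map $0.a_1a_2\cdots,\ 0.b_1b_2\cdots\mapsto 0.a_1b_1a_2b_2\cdots$ is injective on non-terminating expansions but fails to be surjective, since its image misses every number of the form $0.a_10a_30\cdots$ (which would force the second coordinate to be $0$), so some bookkeeping device is unavoidable; the block trick is the cleanest such fix. Alternatively one could sidestep an explicit bijection altogether by exhibiting injections in both directions --- $x\mapsto(x,1)$ one way and digit-interleaving the other --- and invoking the Cantor--Schr\"oder--Bernstein theorem, but the block construction has the merit of being completely effective, which is in the spirit of the ``elegant construction'' mentioned above.
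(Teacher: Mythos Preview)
Your proposal is correct and is essentially identical to the paper's own proof: both fix the non-terminating decimal expansion, cut it into blocks of the form ``zeros followed by a single nonzero digit,'' and interleave the two block sequences to obtain the bijection. You supply more detail on injectivity and surjectivity than the paper does, and your remark about naive digit-interleaving failing surjectivity (so that one would need Cantor--Schr\"oder--Bernstein) is exactly the content of the paper's footnote.
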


\begin{proof}
For real numbers with two decimal expansions, we will choose the one that ends with nines rather than with zeroes. Thus, we represent 1/2 as $0.4999...\left(0.4\overline{9}\right)$ instead of as 0.5.
Let $x = \left(a,b\right) \in$ \R{2}. Break each coordinate into groups consiting of zeroes (possibly none) followed by a single non-zero digit. For example, 1/400 = 0.0024999... is broken up as 0.002 4 9 9 9 ..., and 0.003801007373... as 0.003 8 001 007 3 7 3 ... This is well defined since we have not taken up decimal representations ending with infinite zeroes.
Thus now $x = (0.a_1a_2a_3\ldots, 0.b_1b_2b_3...) \in$ \R{2}, where $a_i$ and $b_j$ represent groups consisting of zeroes followed by a non-zero digit. Let $y = 0.a_1b_1a_2b_2...$ Note that $y$ will not have an infinite sequence of trailing zeroes. The resuling map $\phi : \left(0,1\right]\times\left(0,1\right] \mapsto \left(0,1\right]$, where $\phi\left(x\right)=y$, defines a bijection between the two sets. \footnote{We can also interleave digits, instead of groups as shown. However, this map would be not be surjective, and we'll need to use the Schroeder-Bernstein theorem to show the existence of the bijection. Such a proof would, however, be non-constructive. Cantor himself had originally tried a proof with interleaving digits, but later Dedekind  pointed out the problem of nonunique decimal representations.}
\end{proof}

\begin{corollary}\label{rr}
$\mathbb{R}$ is numerically equivalent to \R{2}. 
\end{corollary}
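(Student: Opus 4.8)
The plan is to obtain the desired bijection between $\mathbb{R}^{2}$ and $\mathbb{R}$ as a composition of three elementary bijections, with the preceding theorem serving as the essential middle link. First I would fix an explicit bijection $g\colon\mathbb{R}\to(0,1]$. Running $g$ in each coordinate yields a bijection $g\times g\colon\mathbb{R}^{2}\to(0,1]\times(0,1]$, since a Cartesian product of bijections is again a bijection. The preceding theorem furnishes a bijection $\phi\colon(0,1]\times(0,1]\to(0,1]$, and $g^{-1}\colon(0,1]\to\mathbb{R}$ is of course a bijection as well. The composite $g^{-1}\circ\phi\circ(g\times g)\colon\mathbb{R}^{2}\to\mathbb{R}$ is then a bijection, which is precisely the assertion that $\mathbb{R}$ is numerically equivalent to $\mathbb{R}^{2}$.

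All that remains is to produce $g$, which I would do in two stages. A bijection $h\colon\mathbb{R}\to(0,1)$ is entirely standard; for instance $h(t)=\tfrac12+\tfrac1\pi\arctan t$ is a strictly increasing continuous surjection onto $(0,1)$. It then remains to biject $(0,1)$ with the half-open interval $(0,1]$, which is the familiar ``Hilbert hotel'' manoeuvre: fix the sequence $x_k=\tfrac1{k+1}$ for $k\ge 1$, send $x_1\mapsto 1$ and $x_k\mapsto x_{k-1}$ for $k\ge 2$, and leave every other point of $(0,1)$ where it is. Writing $s\colon(0,1)\to(0,1]$ for this bijection, the map $g:=s\circ h$ is the bijection $\mathbb{R}\to(0,1]$ we want. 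As the text already observes, one could equally well run the whole argument with $[0,1]$ or $(0,1)$ in place of $(0,1]$ and simply invoke the bijections among these intervals.

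I do not expect any genuine obstacle: once the preceding theorem is available, the corollary is essentially bookkeeping. The only points worth stating explicitly are that numerical equivalence is transitive and is respected by finite Cartesian products — both immediate from the definition, by composing bijections in the first case and forming the product map in the second — and it is exactly these two closure properties that justify the three-step composition above.
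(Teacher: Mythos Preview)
Your proposal is correct and follows essentially the same approach as the paper: the corollary is derived from the preceding theorem by composing the bijection $\phi\colon(0,1]\times(0,1]\to(0,1]$ with a bijection between $\mathbb{R}$ and $(0,1]$ applied coordinatewise, exactly as the paper indicates in the remarks preceding Theorem~1 and in the Appendix (where the bijections $\mathbb{R}\leftrightarrow(0,1)$ and among the intervals $[0,1]$, $(0,1]$, $(0,1)$ are supplied). Your explicit choices of $h(t)=\tfrac12+\tfrac1\pi\arctan t$ and the Hilbert-hotel shift are perfectly acceptable instantiations of the maps the paper invokes.
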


\begin{theorem}
The sets $R^n$ and $R^n+1$ are numerically equivalent, where $n\in\mathbb{N}$.
\end{theorem}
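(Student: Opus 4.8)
The plan is to reduce the statement entirely to Theorem~1 (equivalently Corollary~\ref{rr}) together with a routine ``product of bijections'' argument; no new combinatorial construction is needed. First I would fix a bijection $f\colon\mathbb{R}\to(0,1]$, whose existence is noted in the discussion preceding Theorem~1, and let $\phi\colon(0,1]\times(0,1]\to(0,1]$ be the interleaving bijection constructed there. Then the composite $\psi\colon\mathbb{R}^2\to\mathbb{R}$ given by $\psi(u,v)=f^{-1}\bigl(\phi(f(u),f(v))\bigr)$ is a bijection, being a composition of the bijections $f\times f$, $\phi$ and $f^{-1}$; this already re-proves that $\mathbb{R}$ is numerically equivalent to $\mathbb{R}^2$, and it is the only ingredient with any real content.

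Next I would manufacture the desired map between $\mathbb{R}^{n+1}$ and $\mathbb{R}^n$ by leaving the first $n-1$ coordinates alone and collapsing the last two via $\psi$: define $\Psi\colon\mathbb{R}^{n+1}\to\mathbb{R}^n$ by $\Psi(x_1,\ldots,x_{n-1},x_n,x_{n+1})=\bigl(x_1,\ldots,x_{n-1},\psi(x_n,x_{n+1})\bigr)$, where for $n=1$ this is simply $\Psi=\psi$. I would then verify that $\Psi$ is a bijection: if two images coincide, the first $n-1$ coordinates are forced to agree and $\psi(x_n,x_{n+1})=\psi(x_n',x_{n+1}')$ gives $(x_n,x_{n+1})=(x_n',x_{n+1}')$ by injectivity of $\psi$, so $\Psi$ is injective; and given any $(y_1,\ldots,y_n)\in\mathbb{R}^n$, setting $x_i=y_i$ for $i<n$ and $(x_n,x_{n+1})=\psi^{-1}(y_n)$ produces a preimage, so $\Psi$ is surjective.

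As for the main obstacle: there is not a deep one here, since Theorem~1 carries all the weight. The single point that requires care is that the interleaving construction of Theorem~1 is stated on $(0,1]$ rather than on all of $\mathbb{R}$, so one must conjugate it by the bijection $f$ before splicing it into the product map --- this is exactly what $\psi$ accomplishes. Once that bookkeeping is in place, the ``identity on all but two coordinates'' shape of $\Psi$ makes the bijection property transparent. An equivalent write-up would phrase this as a one-line induction on $n$ with base case Corollary~\ref{rr} and inductive step ``apply $\psi$ to the last two coordinates'', but the construction above already handles every $n$ at once, so I would present it directly.
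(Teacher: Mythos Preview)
Your argument is correct. The paper, however, organizes the proof differently: it runs an induction in which the inductive hypothesis supplies a bijection $\Phi\colon R^{n-1}\to R^n$, and the inductive step sends $y=(y_1,\ldots,y_n)\in R^n$ to $\bigl(\Phi(y_1,\ldots,y_{n-1}),\,y_n\bigr)\in R^{n+1}$ --- that is, it expands the \emph{first} $n-1$ coordinates into $n$ coordinates via the previously built bijection and leaves the last coordinate untouched. You do the mirror-image move: you leave the first $n-1$ coordinates untouched and compress the \emph{last two} coordinates into one via the fixed base-case bijection $\psi\colon\mathbb{R}^2\to\mathbb{R}$. Your version has the advantage of using only the single bijection $\psi$ for every $n$, so no genuine induction is needed and the map $\Psi$ is uniform in $n$; the paper's version has the (minor) advantage that the inductive framing makes the passage from $R^n\sim R^{n+1}$ to $R\sim R^n$ immediate in the next corollary. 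Both routes rest entirely on Corollary~\ref{rr}, and your remark that the bookkeeping with $f$ is the only point requiring care is accurate.
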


\begin{proof}
We’ll proceed by the principal of Mathematical Induction.\\
\textit{Base Case:} For $n=1$: $R$ and $R^2$ are numerically equivalent. (Corollary \ref{rr})\\
\textit{Inductive Hypothesis}: The sets $R^{n-1}$ and $R^n$ are numerically equivalent, where $n\in\mathbb{N}$.\\
\textit{Claim: }The sets $R^{n}$ and $R^{n+1}$ are numerically equivalent, where $n\in\mathbb{N}$.\\
	Let $\Phi:R^{n-1} \mapsto R^n$ be a bijection (Inductive hypothesis)\\
	If $x \in R^{n-1};$ $x = (x_1,...,x_{n-1})$, then ${\Phi(x)} = (\Phi(x)_1,...,\Phi(x)_n);$ ${\Phi(x)}\in R^n$.\\
	Take ${y} = (y_1,...,y_{n}) \in R^n$. The $(n-1)$-tuple formed from this $n$-tuple, by taking the first $n-1$ components is $y(n-1) = (y_1,...,y_{n-1});$ $y(n-1) \in R^{n-1}$.\\
	Let the element formed by appending $y_n$ to the $n$-tuple $ \Phi(y(n-1)) $ be denoted by $ \tau(y) $; $ \tau(y)  = (\Phi(y(n-1))_1,..., \Phi(y(n-1))_{n}, y_n )$.\\ The function $\tau:R^{n} \mapsto R^{n+1}$ where the element $y \in R^n$ is mapped to $\tau(y) \in R^{n+1}$, is a one-one onto correspondence.

\end{proof}
\begin{corollary}
The sets $R$ and $R^n$ are numerically equivalent, where $n\in\mathbb{N}$.
\end{corollary}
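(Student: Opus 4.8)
The plan is to deduce the corollary directly from the preceding theorem (that $R^n$ and $R^{n+1}$ are numerically equivalent for every $n\in\mathbb{N}$) by a short induction on $n$, using the fact that numerical equivalence is an equivalence relation — in particular that it is transitive, since the composition of two bijections is again a bijection.

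First I would dispose of the base case $n=1$: the identity map on $R$ is a bijection, so $R$ and $R^1=R$ are trivially numerically equivalent. For the inductive step I would assume $R$ and $R^n$ are numerically equivalent, so that there is a bijection $f\colon R \mapsto R^n$. The preceding theorem supplies a bijection $\tau\colon R^n \mapsto R^{n+1}$. Then the composite $\tau\circ f\colon R \mapsto R^{n+1}$ is a bijection, being a composition of bijections, which exhibits $R$ and $R^{n+1}$ as numerically equivalent. By the principle of mathematical induction, $R$ and $R^n$ are numerically equivalent for all $n\in\mathbb{N}$.

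Honestly there is no real obstacle here; the only thing worth stating explicitly is the transitivity remark, i.e. that if $A$ is numerically equivalent to $B$ and $B$ to $C$ then $A$ is numerically equivalent to $C$, which is immediate once one recalls that inverses and composites of bijections are bijections. If one preferred an entirely explicit description, one could instead iterate the map $\tau$ from the previous proof $n-1$ times starting from $R$ (relabelled as $R^1$), obtaining the chain $R \mapsto R^2 \mapsto \cdots \mapsto R^n$ and composing; but the inductive phrasing above is cleaner and avoids writing out the iterated composition. Either way the content is the same: bijections chain together, so a single bijection $R\leftrightarrow R^2$ bootstraps to $R\leftrightarrow R^n$ for every $n$.
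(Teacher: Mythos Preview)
Your proposal is correct and is precisely the argument the paper has in mind: the corollary is stated without proof immediately after the theorem that $R^n$ and $R^{n+1}$ are numerically equivalent, and the intended deduction is exactly the induction-plus-transitivity you wrote out. There is nothing to add or change.
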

\begin{corollary}
The sets $R^n$ and $R^k$ are numerically equivalent, where $n,k\in\mathbb{N}$.
\end{corollary}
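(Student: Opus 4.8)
The plan is to deduce this statement at once from the preceding corollary, together with the observation that numerical equivalence is an equivalence relation. Recall that two sets are numerically equivalent exactly when there is a bijection between them. This relation is symmetric, because the inverse of a bijection is again a bijection, and transitive, because the composition of two bijections is again a bijection. So the corollary will follow the moment we can route from $R^n$ to $R^k$ through a common intermediate set.

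First I would invoke the previous corollary twice. It tells us that $R$ is numerically equivalent to $R^n$ and that $R$ is numerically equivalent to $R^k$. Using symmetry in the first instance, pick a bijection $f : R^n \mapsto R$; using the second instance directly, pick a bijection $g : R \mapsto R^k$. Then the composite $g\circ f : R^n \mapsto R^k$ is a bijection, which is precisely what the statement demands. If one wished, one could even make this explicit by unwinding the iterated maps $\tau$ from the proof of the previous theorem together with the interleaving map $\phi$ from the first theorem, but for our purposes the abstract composition argument is cleaner and entirely sufficient.

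I do not expect any genuine obstacle here; the mathematical content was already spent in the inductive argument showing that $R$ is numerically equivalent to $R^n$ for every $n\in\mathbb{N}$, and the present corollary merely records that all the $R^n$ therefore lie in a single numerical-equivalence class. The only thing meriting a passing remark is the boundary behaviour when $n=1$ or $k=1$, in which case one of the two maps $f$, $g$ degenerates to the identity on $R$; this is harmless, since the identity is a bijection and the composition argument goes through unchanged.
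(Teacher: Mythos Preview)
Your proposal is correct and matches the paper's intent: the paper states this result as an immediate corollary with no separate proof, relying on the preceding corollary that $R$ is numerically equivalent to $R^n$ for every $n\in\mathbb{N}$, together with symmetry and transitivity of numerical equivalence. Your argument makes explicit exactly this deduction.
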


\section{The Vector Space $R^n_k$}
Let $\Phi:R^n \mapsto$ \R{k} be a bijection from the set of real n-tuples $R^n$ to the vector space \R{k}. As proved in the previous section, such a bijection exists, for all $n\in\mathbb{N}$ and all $k\in\mathbb{N}$.

Assume $\V{x}, \V{y}\in R^n$ and $c\in\mathbb{R}$. We define two binary operations, vector additon and scalar multiplication, on the set $R^n$ as follows:
\begin{align}
\text{\textbf{Vector Addition}} \colon \V{x}  \oplus \V{y} & = \Phi^{-1}(\Phi(\V{x}) + \Phi(\V{y}))\label{eq:1}\\
\text{\textbf{Scalar Multiplication}} \colon c\odot\V{x} &= \Phi^{-1}(c\cdot(\Phi(\V{x}))
\label{eq:2}
\end{align}

\subsection{Axioms}

To qualify as a vector space, the set $R^{n}$ with the operations vector addition and scalar multiplication defined above must satisfy the following eight axioms.\\
\begin{enumerate}

\item \textbf{Commutativity of addition}

\begin{align*}
& \,\quad \V{x}  \oplus \V{y} \\
& = \Phi^{-1}(\Phi(\V{x}) + \Phi(\V{y})) \tag{From Equation \ref{eq:1}}\\ 
& = \Phi^{-1}(\Phi(\V{y}) + \Phi(\V{x}))  \tag{Commutativity of \R{k}}\\  
& = \V{y} \oplus \V{x} \tag{From Equation \ref{eq:1}}
\end{align*}
\item \textbf{Associativity of addition}
\begin{align*}
& \,\quad \V{x} \oplus (\V{y} \oplus \V{z}) \\
& = \V{x} \oplus (\Phi^{-1}(\Phi(\V{y}) + \Phi(\V{z}))) \tag{From Equation \ref{eq:1}}\\
& = \Phi^{-1}(\Phi(\V{x}) + \Phi((\Phi^{-1}(\Phi(\V{y}) + \Phi(\V{z}))))) \tag{From Equation \ref{eq:1}}\\
& = \Phi^{-1}(\Phi(\V{x}) + (\Phi(\V{y}) + \Phi(\V{z}))) \tag{$\Phi \circ \Phi^{-1} = \mathds{1}$}\\
& = \Phi^{-1}((\Phi(\V{x}) + \Phi(\V{y})) + \Phi(\V{z}))\tag{Associativity of \R{k}} \\
& = \Phi^{-1}((\Phi(\Phi^{-1}(\Phi\V{x} + \Phi\V{y})) + \Phi(\V{z})) \tag{$\Phi \circ \Phi^{-1} = \mathds{1}$}\\
& = \Phi^{-1}((\Phi(\V{x} + \V{y}) + \Phi(\V{z})) \tag{From Equation \ref{eq:1}} \\
& = (\V{x} + \V{y}) \oplus \V{z} \tag{From Equation \ref{eq:1}}
\end{align*}
\item \textbf{Existence of Identity element of addition}
\begin{align*}
 & \,\quad \V{x} \oplus \Phi^{-1}(0_k) \\
& =\Phi^{-1}(\Phi(\V{x}) + \Phi(\Phi^{-1}(0_k))) \tag{From Existence of Identity $0_k$ for \R{k}}\\
& =\Phi^{-1}(\Phi(\V{x}) + 0_k) \tag{$\Phi \circ \Phi^{-1} = \mathds{1}$}\\
& = \Phi^{-1}(\Phi(\V{x})) \tag{ $0_k$ is Identity for \R{k}}\\
& = \V{x} \tag{Hence Additive Identity for $R^{n}_{k}$ is $0_{k}$}
\end{align*}
\item \textbf{Existence of Additive Inverse}
\begin{align*}
& \,\quad \V{x} \oplus \Phi^{-1}(-\Phi(\V{x})) \\
& = \Phi^{-1}(\Phi(\V{x}) + \Phi(\Phi^{-1}(-\Phi(\V{x}))) \tag{From Equation \ref{eq:1}}\\
& = \Phi^{-1}(\Phi(\V{x}) + (-\Phi(\V{x}))) \tag{$\Phi \circ \Phi^{-1} = \mathds{1}$}\\
& = \Phi^{-1}(0_k) \tag{Hence Additive Inverse of $\V{x}$ is $\Phi^{-1}(-\Phi(\V{x}))$}
\end{align*}
\item \textbf{Existence of Identity element of scalar multiplication}
\begin{align*}
 & \,\quad 1\odot\V{x} \\
 & = \Phi^{-1}(1\cdot(\Phi(\V{x})) \tag{From Equation \ref{eq:2}}\\ 
 & = \Phi^{-1}(\Phi(\V{x}))  \tag{Multiplication by Scalar Identity for \R{k}}\\  
 & = \V{x} \tag{$\Phi \circ \Phi^{-1} = \mathds{1}$}\\ 
 \end{align*}
\item \textbf{Compatibility of scalar multiplication with field multiplication}
\begin{align*}
 & \,\quad  c_1\odot(c_2\odot\V{x})   \\
& = \Phi^{-1}(c_1\cdot(\Phi(c_2\odot\V{x})) \tag{From Equation \ref{eq:2}}\\
& = \Phi^{-1}(c_1\cdot(\Phi(\Phi^{-1}(c_2\cdot\V{x}))) \tag{From Equation \ref{eq:2}}\\
 & = \Phi^{-1}(c_1\cdot(c_2\cdot\Phi(\V{x})))  \tag{$\Phi \circ \Phi^{-1} = \mathds{1}$}\\  
 & = \Phi^{-1}(c_1c_2\cdot\Phi(\V{x})))  \tag{Compatibility of \R{k}}\\ 
& = c_1c_2\odot\V{x} \tag{From Equation \ref{eq:2}}\\
\end{align*}
\item \textbf{Distributivity of scalar multiplication with respect to vector addition}
\begin{alignat*}{2}
 & \,\quad c\odot(\V{x} \oplus \V{y}) \\
 & = \Phi^{-1}(c\cdot\Phi(\V{x} \oplus \V{y}) ) \tag{From Equation \ref{eq:2}}\\
 & = \Phi^{-1}(c\cdot\Phi(\Phi^{-1}(\Phi(\V{x}) + \Phi(\V{y}))) ) \tag{From Equation \ref{eq:1}}\\
 & = \Phi^{-1}(c\cdot(\Phi(\V{x}) + \Phi(\V{y})) ) \tag{$\Phi \circ \Phi^{-1} = \mathds{1}$}\\  
& =\Phi^{-1}(c \cdot \Phi(\V{x}) + c\cdot \Phi(\V{y}) ) \tag{From Distributivity of \R{k}}\\
& =\Phi^{-1}(\Phi(\Phi^{-1}(c \cdot \Phi(\V{x})) + \Phi(\Phi^{-1}(c\cdot \Phi(\V{y}) )) \tag{$\Phi \circ \Phi^{-1} = \mathds{1}$}\\  
& = \Phi^{-1}( \Phi(c\odot\V{x}) + \Phi(c\odot\V{y}) )  \tag{From Equation \ref{eq:2}}\\
& = (c\odot\V{x}) \oplus (c\odot\V{y})  \tag{From Equation \ref{eq:1}}\\
\end{alignat*}
\item \textbf{Distributivity of scalar multiplication with respect to field addition}
\begin{align*}
& \,\quad (c_1 + c_2) \odot \V{x}\\
& =  \Phi^{-1}((c_1 + c_2) \cdot (\Phi(\V{x})) \tag{From Equation \ref{eq:2}}\\
& =  \Phi^{-1}(c_1\cdot (\Phi(\V{x}) + c_2\cdot (\Phi(\V{x})) \tag{From Distributivity of \R{k}}\\
\end{align*} 
\end{enumerate}

The set $R^n$, together with vector additon and scalar multiplication as defined in Equation \ref{eq:1} and \ref{eq:2}, satisfies all eight axioms of a vector space over the field $\mathbb{R}$. We call this vector space $R^{n}_{k}$.
\subsection{Basis and Dimension} 

What is the dimension of the vector space $R^{n}_{k}$.?

Let $\mathcal{B} = \lbrace \alpha_1, \ldots, \alpha_k \rbrace $ be an ordered basis for \R{k}. Let $T \subset R^n \colon T = \lbrace \Phi^{-1}(\alpha_1), \ldots, \Phi^{-1}(\alpha_k) \rbrace$.

Take $\V{x} \in R^{n}$. Since $\mathcal{B}$ is a basis for \R{k}, and $\Phi(\V{x}) \in$ \R{k}, there exists a unique set of real numbers (coordinates) $(x_1, \ldots, x_k)$ such that:
\begin{align*}
\Phi(\V{x}) =  x_1\cdot\alpha_1 + \ldots + x_k\cdot\alpha_k \\
\Leftrightarrow \V{x} = \Phi^{-1}(x_1\cdot\alpha_1 + \ldots + x_k\cdot\alpha_k) \\
\Leftrightarrow \V{x} = \Phi^{-1}(x_1\cdot\Phi(\Phi^{-1}\alpha_1) + \ldots + x_k\cdot\Phi(\Phi^{-1}\alpha_k)) \\
\Leftrightarrow \V{x} = \Phi^{-1}( \Phi(x_1\odot\Phi^{-1}\alpha_1) + \ldots +\Phi( x_k\odot\Phi^{-1}\alpha_k ) ) \\
\Leftrightarrow \V{x} = (x_1\odot\Phi^{-1} \alpha_1) \oplus \ldots \oplus (x_k\odot\Phi^{-1}\alpha_k) \\
\end{align*}
Thus, for every $\V{x} \in R^{n}$, there exists a unique set of real numbers  $(x_1, \ldots, x_k)$ such that $ \V{x} = x_1\odot\Phi^{-1} \alpha_1 + \ldots + x_k\odot\Phi^{-1}\alpha_k$. This implies that $ \Phi^{-1}(\mathcal{B})\subset R^n = \lbrace \Phi^{-1}(\alpha_1), \ldots, \Phi^{-1}(\alpha_k) \rbrace $ is a basis for $R^{n}_{k}$. Since $|\Phi^{-1}(\mathcal{B})| = k$, $dim\, R^{n}_{k} = k$.\footnote{The dimension of a vector space is the same as the cardinality of its basis}

\section{Conclusion}

Let $\Phi:V \mapsto W$ be a one-one onto correspondence, where $W$ is a vector space over field $\mathbb{F}$. Then we can always define vector addition and scalar multiplication on the set $V$, in such a way, so that it attains the vector space structure and dimension of $W$. This would turn $\Phi$ into a linear map between the two vector spaces, $V$ and $W$.
In particular, if $W$ is a $k$-dimensional Euclidean space, then we can always give the set \R{n} the structure of the $k$-dimensional space. The set $ \left\{(x_1,x_2,x_3): x_i \in \mathbb{R}, i= 1,2,3\right\}$ for instance, can always be represented as having dimension of 1 over $\mathbb{R}$.

\section*{Appendix}

\begin{theorem} \label{{easy}}
The sets $\left[0,1\right], \left(0,1\right]$ and $(0,1)$ are in bijection with each other.
\end{theorem}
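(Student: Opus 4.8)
The plan is to exhibit explicit bijections and to use the fact that ``being in bijection with'' is an equivalence relation on sets (reflexive via the identity, symmetric via inverses, transitive via composition), so that it suffices to produce a bijection $[0,1] \to (0,1]$ and a bijection $(0,1] \to (0,1)$; composing these and passing to inverses then links all three sets pairwise. A naive attempt to simply send the stray endpoint(s) to some interior point fails to be injective, so the key device is a Hilbert-hotel style shift along a fixed countable sequence inside the interval.

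First I would handle $[0,1] \to (0,1]$. Fix the sequence of points $1, 1/2, 1/3, \ldots$ and define $f \colon [0,1] \to (0,1]$ by $f(0) = 1$, $f(1/n) = 1/(n+1)$ for each integer $n \geq 1$, and $f(x) = x$ for every other $x \in [0,1]$. I would then check three things: that $f$ is well defined (the distinguished inputs $0, 1, 1/2, 1/3, \ldots$ are pairwise distinct, so no conflicting assignments occur); that $f$ is injective ($f$ restricted to the distinguished inputs is injective because $n \mapsto n+1$ is, $f$ restricted to the complement is the identity hence injective, and the two image blocks $\{1/n : n \geq 1\}$ and $[0,1] \setminus \left(\{0\} \cup \{1/n : n \geq 1\}\right)$ are disjoint); and that $f$ is surjective onto $(0,1]$ (each $1/n$ with $n \geq 1$ is the image of $1/(n-1)$ or of $0$, every remaining point of $(0,1]$ is a fixed point, and $0$ is correctly excluded from the range since nothing maps to it).

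Next I would treat $(0,1] \to (0,1)$ by the same trick shifted one notch inward: define $g \colon (0,1] \to (0,1)$ by $g(1/n) = 1/(n+1)$ for $n \geq 1$ and $g(x) = x$ otherwise. The verification is line-for-line the same as for $f$, the only difference being that the ``extra'' point now absorbed into the sequence is the right endpoint $1$ rather than the left endpoint $0$. Then $g \circ f \colon [0,1] \to (0,1)$ is a bijection as a composition of bijections, and combining this with $f$, $g$, and their inverses shows that $[0,1]$, $(0,1]$, and $(0,1)$ are in bijection with each other.

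I do not anticipate any genuine obstacle: the entire content lies in recognizing that the endpoints cannot simply be ``deleted'' without destroying injectivity, and that threading them down an infinite sequence repairs this at no cost. For completeness I might remark that a non-constructive alternative is available — apply the Schröder--Bernstein theorem to the inclusions $(0,1) \subset (0,1] \subset [0,1]$ together with the injection $x \mapsto (x+1)/3$ from $[0,1]$ into $(0,1)$ — but the explicit maps above are cleaner and consistent with the constructive spirit of the rest of the paper.
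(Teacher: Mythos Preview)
Your proof is correct and follows essentially the same approach as the paper: both construct the bijection $[0,1]\to(0,1]$ by a Hilbert-hotel shift along a fixed countable sequence (the paper uses $0\mapsto\tfrac12\mapsto\tfrac23\mapsto\tfrac34\mapsto\cdots$ whereas you use $0\mapsto 1\mapsto\tfrac12\mapsto\tfrac13\mapsto\cdots$), leaving all other points fixed, and then repeat the trick for $(0,1]\to(0,1)$. The only difference is the particular choice of sequence, which is immaterial.
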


\begin{proof}
Let $f$ be the function with mappings $0\mapsto \frac12, \frac12\mapsto\frac23,\frac23\mapsto\frac34,$ and so on. For any other $x \in [0,1] -\left\{0, \frac12, \frac23, \frac34,\ldots\right\}$, $f\left(x\right) = x$. Then $f:[0,1]\mapsto(0,1]$ is a bijection. A bijection from $\left(0,1\right] to \left(0,1\right)$ can be found similarly.
\end{proof}

\begin{theorem}\label{figthm}
There is a bijection from $\mathbb{R}$ to the open interval $(0,1)$.
\end{theorem}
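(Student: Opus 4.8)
The plan is to exhibit an explicit bijection rather than appeal to the Schroeder--Bernstein theorem, in keeping with the constructive spirit of the rest of the paper. A convenient choice is the rescaled arctangent
\[
f(x) = \frac{1}{2} + \frac{1}{\pi}\arctan(x),
\]
which I claim maps $\mathbb{R}$ bijectively onto $(0,1)$. (The logistic function $x \mapsto 1/(1+e^{-x})$, or the composition of $x \mapsto x/(1+|x|)$ with an affine rescaling of $(-1,1)$ onto $(0,1)$, would serve equally well.)

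First I would establish injectivity: since $\arctan$ is strictly increasing on $\mathbb{R}$ and $t \mapsto \tfrac12 + \tfrac1\pi t$ is strictly increasing, the composite $f$ is strictly increasing, hence injective. Next I would check that $f$ really lands in the open interval: because $\arctan$ takes values in $(-\pi/2,\pi/2)$, we get $f(x)\in(0,1)$ for every $x$, with the endpoints $0$ and $1$ approached but never attained (this is exactly why the codomain is the \emph{open} interval, as the statement requires). For surjectivity onto $(0,1)$, I would note $\lim_{x\to-\infty} f(x) = 0$ and $\lim_{x\to+\infty} f(x) = 1$, and then invoke continuity of $f$ together with the intermediate value theorem to conclude that every value in $(0,1)$ is attained. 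Injectivity plus surjectivity gives the bijection.

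Alternatively — and this is probably the cleanest write-up — I would simply produce the inverse in closed form, $f^{-1}(y) = \tan\!\bigl(\pi(y - \tfrac12)\bigr)$ for $y \in (0,1)$, and verify directly that $f\circ f^{-1} = \mathrm{id}_{(0,1)}$ and $f^{-1}\circ f = \mathrm{id}_{\mathbb{R}}$ using the standard identities for $\tan$ and $\arctan$. Having a two-sided inverse makes the bijection claim immediate and sidesteps any appeal to monotonicity or the intermediate value theorem.

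I do not anticipate a genuine obstacle here; this is a routine undergraduate fact. The only points requiring a moment of care are confirming that the image is the open interval and not a half-open or closed one (handled by the fact that $\arctan$ never attains $\pm\pi/2$), and, if the inverse-function route is taken, being explicit about the domain restriction so that $\pi(y-\tfrac12)$ stays in $(-\pi/2,\pi/2)$ where $\tan$ is a genuine inverse of $\arctan$.
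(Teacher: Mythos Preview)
Your argument is correct. The explicit arctangent map (or its inverse in closed form) is a clean, fully rigorous bijection, and your attention to why the image is the \emph{open} interval is exactly the right point of care.

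The paper takes a different, purely geometric route: it places a semicircle of radius $\tfrac12$ tangent to the real axis at $\tfrac12$, sends each $x\in\mathbb{R}$ to the point where the segment from $x$ to the centre meets the semicircle, and then projects that point vertically back down to the axis to land in $(0,1)$. Conceptually this is the same phenomenon you are exploiting---the projection through the centre encodes an angle, so the resulting map is an affine rescaling of $\arctan$---but the paper presents it pictorially rather than by formula. Your analytic version has the advantage of an explicit inverse and a verification that does not rely on reading a figure; the paper's version has the advantage of a one-glance geometric explanation of why the endpoints $0$ and $1$ are omitted (they correspond to the excluded endpoints of the semicircle, i.e.\ to ``$x=\pm\infty$''). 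Either approach is entirely adequate here.
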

\begin{proof} 
\includegraphics[scale=0.5,trim=0 0 0 0,clip=]{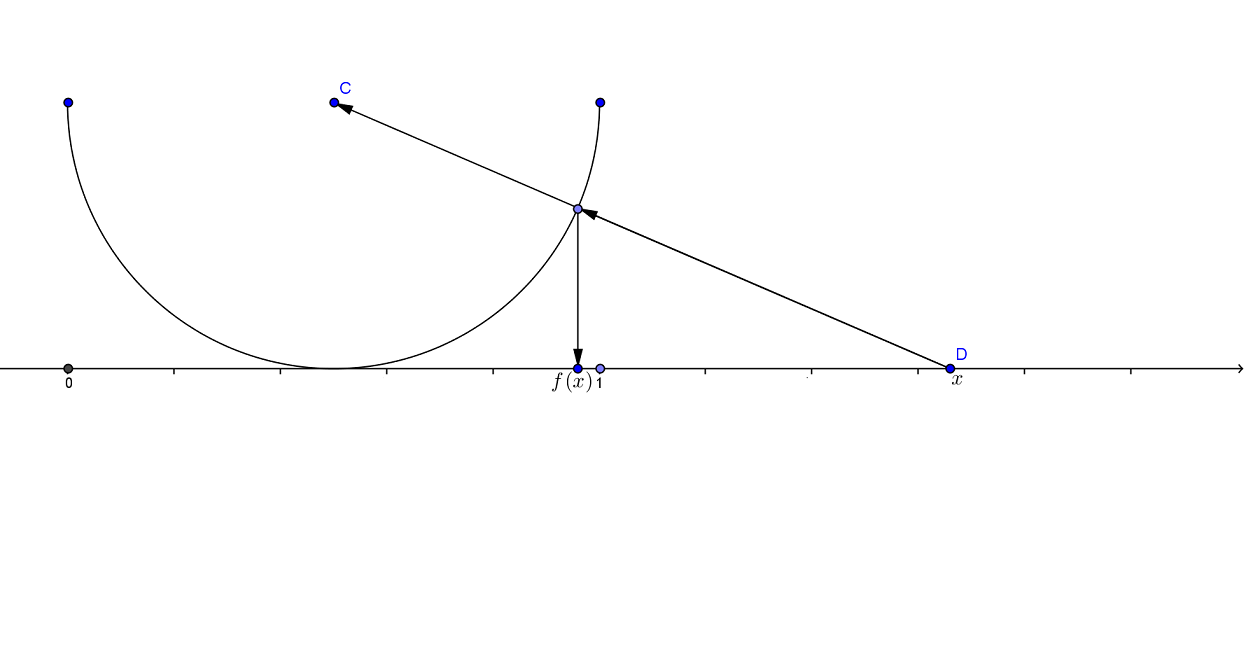}\label{fig:main}
In Figure \ref{fig:main}, $S$ is a semi-circle, excluding the end points. $S$ is drawn in such a way that the real axis is tangent to it at 0.5, with radius 0.5. An arbitrarily chosen point $x \in \mathbb{R}$ is joined to the centre of $S$ with a line. The point $x'$ is obtained on $S$ by the intersection of the line and the semi-circle. This gives a one-to-one onto correspondence between the every point $x \in \mathbb{R}$ and $x' \in S$. Now project the point $x' \in S$ to the point $f\left(x\right)$ on the real axis. The function $f$ thus defined gives a bijection between $\mathbb{R}$ and open interval $(0,1)$.

\end{proof}

\begin{theorem}
$R^n_k$ is isomorphic to the space \R{k}.
\end{theorem}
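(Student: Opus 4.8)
The plan is to prove that the bijection $\Phi$ used to define the operations on $R^n$ is itself the required isomorphism. Recall that a vector space isomorphism is, by definition, a bijective linear map; since $\Phi \colon R^n \to \R{k}$ is already known to be a bijection (Section 2), the only thing left to establish is that $\Phi$ is linear as a map from $R^n_k$ to \R{k}, i.e. that it respects the operations $\oplus$ and $\odot$.

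First I would verify additivity. Applying $\Phi$ to both sides of Equation \ref{eq:1} and using $\Phi \circ \Phi^{-1} = \mathds{1}$ collapses the right-hand side, giving
\[
\Phi(\V{x} \oplus \V{y}) = \Phi\bigl(\Phi^{-1}(\Phi(\V{x}) + \Phi(\V{y}))\bigr) = \Phi(\V{x}) + \Phi(\V{y})
\]
for all $\V{x}, \V{y} \in R^n$. Then I would do the same with Equation \ref{eq:2}, obtaining $\Phi(c \odot \V{x}) = \Phi(\Phi^{-1}(c \cdot \Phi(\V{x}))) = c \cdot \Phi(\V{x})$ for every $c \in \mathbb{R}$. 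These two identities say precisely that $\Phi$ is a linear transformation from $R^n_k$ to \R{k}.

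Having shown $\Phi$ is linear, injective, and surjective, I would conclude that $\Phi$ is an isomorphism, so $R^n_k$ is isomorphic to \R{k}. (If one prefers not to take ``bijective linear map'' as the definition of isomorphism, then applying $\Phi^{-1}$ to the two identities above shows that $\Phi^{-1}$ is linear as well.) A less direct alternative would be to invoke the standard theorem mentioned at the start of Section 2 --- every $k$-dimensional real vector space is isomorphic to \R{k} --- together with the earlier computation that $\dim R^n_k = k$; but that route does not exhibit the isomorphism explicitly.

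I do not anticipate any genuine obstacle: the operations $\oplus$ and $\odot$ were defined in Equations \ref{eq:1}--\ref{eq:2} precisely so that transporting a structure through $\Phi$ would make $\Phi$ linear, so the argument is essentially a one-line unwinding of definitions. The only point deserving a moment's care is being explicit about which formulation of ``isomorphism'' is being used, so that it is clear no further verification is required.
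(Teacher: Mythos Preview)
Your proposal is correct and matches the paper's own proof essentially line for line: the paper also applies $\Phi$ to both sides of Equations~\ref{eq:1} and~\ref{eq:2}, obtains $\Phi(\V{x}\oplus\V{y})=\Phi(\V{x})+\Phi(\V{y})$ and $\Phi(c\odot\V{x})=c\cdot\Phi(\V{x})$, and concludes that the bijection $\Phi$ is a linear transformation and hence an isomorphism. Your additional remarks about $\Phi^{-1}$ and the dimension-based alternative go slightly beyond what the paper records, but the core argument is identical.
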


\begin{proof}
Note that:
\begin{eqnarray}
\Phi(\V{x}\oplus\V{y}) &= \Phi(\Phi^{-1}(\Phi(\V{x}) + \Phi(\V{y}))) &= \Phi(\V{x}) + \Phi(\V{y})
\label{eq:3}\\
\Phi(c\odot\V{x}) &= \Phi(\Phi^{-1}(c\cdot(\Phi(\V{x}))) &= c\cdot(\Phi(\V{x}))
\label{eq:4}
\end{eqnarray}
Hence, the function $\Phi$ is a linear transformation as well as a bijection from $R^n_k$ to \R{k}.
Thus, $R^n_k \cong$ \R{k} 
\end{proof}

\begin{corollary}
The space $R^n_k$, defined as above, is a vector space over the field $\mathbb R$ with dimension $k$.
\end{corollary}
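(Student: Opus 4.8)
The plan is to assemble the corollary directly from the three ingredients already in hand, since it requires no new analytic content. First I would invoke the axiom verification of Section~3: the eight computations carried out there show that $R^n$, equipped with the operations $\oplus$ and $\odot$ of Equations \ref{eq:1} and \ref{eq:2}, satisfies commutativity and associativity of $\oplus$, possesses the additive identity $\Phi^{-1}(0_k)$ and the additive inverse $\Phi^{-1}(-\Phi(\V{x}))$ of each $\V{x}$, obeys the scalar identity law $1\odot\V{x}=\V{x}$, is compatible with multiplication in $\mathbb{R}$, and satisfies both distributive laws. By the definition of a vector space, this already establishes that $R^n_k$ is a vector space over $\mathbb{R}$, settling the first half of the statement.

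For the dimension I would offer two routes and keep whichever reads more cleanly. The direct route reuses the Basis and Dimension subsection, where $\Phi^{-1}(\mathcal{B}) = \{\Phi^{-1}(\alpha_1),\ldots,\Phi^{-1}(\alpha_k)\}$ was shown to be a basis of $R^n_k$: every $\V{x}\in R^n$ admits a unique expansion $\V{x} = (x_1\odot\Phi^{-1}\alpha_1)\oplus\cdots\oplus(x_k\odot\Phi^{-1}\alpha_k)$, and since this basis has exactly $k$ elements, $\dim R^n_k = k$. The indirect route uses the isomorphism theorem proved just above: the map $\Phi$ is at once a bijection from $R^n$ onto \R{k} and, by Equations \ref{eq:3} and \ref{eq:4}, a linear map, hence a vector space isomorphism $R^n_k \cong$ \R{k}; since isomorphic vector spaces have the same dimension, $\dim R^n_k = k$ once more.

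The one place I would slow down is checking that the symbol $\dim$ is legitimately applicable here, i.e.\ that both routes rest on the standard invariance theorem that any two bases of a finitely generated vector space have the same cardinality. I would note explicitly that $R^n_k$ is finitely generated precisely because $\Phi^{-1}(\mathcal{B})$ spans it, so the theorem applies and $\dim R^n_k$ is well defined and equal to $k$. I do not anticipate any obstacle beyond this bookkeeping: all of the real work --- in particular the existence of the bijection $\Phi$ furnished by the interleaving construction of Section~2 --- has already been done, and the corollary is simply a repackaging of it.
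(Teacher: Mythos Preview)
Your proposal is correct and subsumes the paper's own treatment: the paper states this corollary without proof immediately after establishing $R^n_k \cong \mathbb{R}^k$, so its intended argument is precisely your ``indirect route'' via the isomorphism, while your ``direct route'' simply repeats what Section~3.2 already did. The extra paragraph on well-definedness of $\dim$ is sound but goes beyond anything the paper makes explicit.
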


\section*{Acknowledgement}
We are grateful to Ashutosh Nayyar, for the coming up with the initial idea which eventually sparked off the result. 

\bibliographystyle{amsmath}
\nocite{*}

\bibliography{bibliography.bib}
\end{document}